\newtheorem{thm}{Theorem}[section]
\newtheorem{lem}[thm]{Lemma}
\newtheorem{defi}[thm]{Definition}
\newtheorem{notat}[thm]{Notation}
{ \theoremstyle{remark} }
\begin{document}

\title[A Generalization of the Hausdorff Dimension Theorem for Fractals]
{A Generalization of the Hausdorff Dimension Theorem for Fractals}
\date{Jun 5, 2021}
\maketitle
\begin{center}
\author {Mohsen Soltanifar\footnote{Biostatistics Division, Dalla Lana School of Public Health, University of Toronto, Toronto, ON, Canada\\
e-mail: mohsen.soltanifar@alum.utoronto.ca \\
ORCID: https://orcid.org/0000-0002-5989-0082}}
\end{center}

\begin{abstract}
How many fractals exist in nature or the virtual world? In this note, we partially answer the second question using Mandelbrot’s fundamental definition of fractals and their quantities of the Hausdorff dimension and Lebesgue measure. We prove the existence of  beth- two of virtual fractals with a Hausdorff dimension of a bi-variate function of them and the given Lebesgue measure. The question remains unanswered for other  fractal dimensions.
\end{abstract}

%%%%%%%%%%%%%%%%%%%%%%%%%%%%%%%%%%%%%%%%%%%%%%%%%%%%%%%%%%
																												
\textbf{Keywords}   Cantor Set, fat Fractals, Hausdorff Dimension, continuum, beth-two\\

\textbf{Mathematics Subject Classification (2020).}  28A80, 03E10
%%%%%%%%%%%%%%%%%%%%%%%%%%%%%%%%%%%%%%%%%%%%%%%%%%%%%%%%%%
 
\section{Introduction}
Benoit Mandelbrot (1924-2010) coined the term \emph{fractal} and its dimension in his 1975 essay on the quest to present a mathematical model for self-similar, fractured geometric shapes in nature \cite{R1}. Both nature and the virtual world have prominent examples of fractals: Some natural examples include snowflakes, clouds and mountain ranges, while some virtual instances are the middle third Cantor set, the Sierpinski triangle and the Vicsek fractal. A key element in the definition of the term \emph{fractal} is its dimension  usually indexed by the Hausdorff dimension. For instance, the usual Vicsek has the Hausdorff dimesnion equal to $\log_{3}(5).$ While each fractal has its Hausdorff dimension as a unique number in $(0,+\infty)$, there has been little evidence on the inverse existence statement. Given that constructive existence theorems play a key role in a wide spectrum of mathematical fields and computer science \cite{RR2,RR3}, one may ask: {\em Is there any fractal for given Hausdorff dimension ?}  And, in case of affirmative response, {\em What are the features of the set of such fractals?} To the author's best knowledge, the only available existence theorem of fractals in the literature is limited to the case of the fractals with Lebesgue measure 0 (thin fractals) \cite{R2} and its very minor version in $\mathbb{R}$\cite{RR1}. However, on one hand, there are  fractals in the Euclidean spaces with a positive Lebesgue measure (fat fractals) applied to model real physical systems \cite{RR4,RR5}. Examples include the fat Cantor set with Hausdorff dimension of 1, the fat Vicsek fractal with Hausdorff dimension of 2, and the fat Menger Sponge with Hausdorff dimension of 3. On the other hand, the current existential results in the literature\cite{R2,RR1} lacks their provision for  the highest potential cardinal of beth- two  and a given Lebesgue measure. This issue gets more complex given the fact that fractals with a positive Lebesgue measure must have a positive integer Hausdorff dimension.  \par 

This work presents a parallel existence result for fractals of a given Hausdorff dimension and a positive Lebesgue measure in n-dimensional Euclidean spaces. In light of this aim, this work provides a comprehensive deterministic framework, providing a second proof for the former results and, in particular, generalizes them  in terms of Lebesgue measure and cardinal number.

%%%%%%%%%%%%%%%%%%%%%%%%%%%%%%%%%%%%%%%%%%%%%%%%%%%%%%%%%%%%%%%%%%%%%%%%

\section{Preliminaries}
The reader who has studied fractal geometry is well-equipped with the following definitions and key properties of  the Uniform Cantor sets, Topological Dimension and the Hausdorff Dimension. Henceforth, in this paper we consider the n-dimensional Euclidean space $\mathbb{R}^n$ with its conventional Euclidean metric and the Lebesgue measure of $\lambda_{n}(n\in\mathbb{N})$. 

\subsection{The Uniform Cantor Set}
The Uniform Cantor set, or Smith-Volterra-Cantor set, or the fat Cantor set, was introduced in a series of publications by Henry Smith in 1875, Vito Volterra in 1881, and George Cantor in 1883, respectively. To construct the uniform Cantor set, let $\{\beta_n \}_{n\in\mathbb{N}}$-called the removing sequence- be a sequence of positive numbers with $\sum_{n\in\mathbb{N}} \beta_n=1-l\in(0,1].$ Take $s\in \mathbb{N},$ and set $I_{s,0}=[0,1].$ Define $I_{s,n} (n\in\mathbb{N})$ recursively by removing $s$ symmetrically located open intervals with equal length of $\frac{\beta_n}{s(s+1)^{n-1}}$ from middle of each interval in $I_{s,n-1}$. Then, $I_{s,n}$ is the union of $(s+1)^n$ disjoint closed intervals $I_{s,n}^{(j)}$ of equal length $\delta_n=\frac{1-\sum_{k=1}^{n}\beta_k}{(s+1)^n}$;  furthermore, the sequence  $\{I_{s,n}\}_{n\in\mathbb{N}_0}$ is decreasing sequence with finite intersection property in the compact space $[0,1].$  
 
\begin{defi}
\label{Definition2.1}
The Uniform Cantor set of order $s\geq 1$ and Lebesgue measure $l\geq 0$ associated with the removing sequence $\{\beta_n \}_{n}$ is defined as:
\begin{equation}
\label{one}
C_{\{\beta_n \}_{n}}(s,l)=\cap_{n=0}^{\infty} I_{s,n}.
\end{equation}
\end{defi}
It is well known that  the Uniform Cantor set $C_{\{\beta_n \}_{n}}(s,l)$ is nowhere dense, totally disconnected, perfect and uncountable \cite{R2}. Next, by definition, $\lambda_{1}([0,1]-I_{s,n})=\sum_{k=1}^{n}s(s+1)^{k-1}(\frac{\beta_k}{s(s+1)^{k-1}})=\sum_{k=1}^{n}\beta_k (n\in\mathbb{N}).$ Consequently, the Lebesgue measure of the Uniform Cantor set $C_{\{\beta_n \}_{n}}(s,l)$ is given by:
\begin{eqnarray}
\label{two}
\lambda_{1} (C_{\{\beta_n \}_{n}}(s,l)) &=& 1- \lambda_{1} ([0,1]-C_{\{\beta_n \}_{n}}(s,l)) = 1- \lambda_{1} (\cup_{n\in\mathbb{N}}([0,1]-I_{s,n}))\nonumber\\
&=& 1-\lim_{n\rightarrow\infty} \lambda_{1}([0,1]-I_{s,n})=1-\sum_{n\in\mathbb{N}}\beta_n=l.
\end{eqnarray}
A prominent family of the Uniform Cantor sets in the literature \cite{R2,R3,R4,R5} is obtained for the case of 
\begin{equation}
\label{three}
\beta_n=\beta_n(s,\beta,l)=((s+1)\beta)^{n-1}(1-(s+1)\beta)(1-l)
\end{equation}
where $s\geq 1, 0<\beta<\frac{1}{s+1}$ and $0\leq l<1.$ In particular, the ordinary middle third Cantor set is obtained whenever $s=1, \beta=\frac{1}{3},$ and $l=0.$ It is simultaneously the first term of two distictive sequences of the Uniform Cantor sets: one  including $s\geq 1,\ \beta=\beta(s)=\frac{1}{2s+1}(s\geq 1),\ l=0 $ \cite{R2}; and, another including $s=1,\ \beta=\beta(s^*)=\frac{1}{s^*+2}(s^* \geq 1),\ l=0,$ \cite{R3}. 

\subsection{Topological Dimension}
The Topological dimension considered here is the  Urysohn-Menger small inductive dimension. It is defined inductively by setting $dim_{ind}(\phi)=-1.$ We then say that for given $C\subseteq \mathbb{R}^n,$ we have $dim_{ind}(C)\leq k$ whenever there is a base $(U_i)_{i\in I}$ of open sets of $C$ such that   $dim_{ind}(U_i)\leq k-1 (i\in I).$ We then say $dim_{ind}(C)=k$ whenever $dim_{ind}(C)\leq k$ but $dim_{ind}(C)\nleq k-1 (k\in\mathbb{N}_0),$ \cite{R5}.\par 
For the case of a zero-topological dimension, we may consider a more informative definition based on the idea of clopen (i.e., simultaneously open and closed) sets as follows \cite{R5}:  
\begin{defi}
\label{Definition2.2}
The space  $C\subseteq \mathbb{R}^n$ has zero topological dimension (i.e. $dim_{ind}(C)=0$) whenever every finite open cover $\{U_i \}_{i\in I} (|I|<\infty)$ of $C$ has a finite refinement  $\{V_i \}_{i\in J} (|J|\leq |I|<\infty)$ that is a clopen partition of $C.$
\end{defi}
Using above definition, it has been shown that for the case of middle third Cantor set $C=C_{\{\frac{2^n}{2.3^n}\}_n}(1,0)$ we have $dim_{ind}(C)=0,$ \cite{R5}. A straightforward generalization of the proof shows that for the uniform Cantor set $C=C_{\{\beta_n \}_{n}}(s,l)$ we also have the same conclusion. The following theorem summarizes some prominent properties of the small inductive dimension required in this paper\cite{R5}:

\begin{thm}
\label{Theorem2.3}
Let  $\{C_i\}_{i\in I}$ be a countable family of subsets of $\mathbb{R}^{n}\ (n \geq 1).$ Then:\newline\\
(i)\ $dim_{ind}(C_i)\in \{-1,0,\cdots,n \}$ where $i\in I,$\\
(ii)\ $dim_{ind} (C_i)\leq dim_{ind}(C_j)$ where $C_i\subseteq C_j (i\neq j),$\\
(iii)\ $dim_{ind} (cC_i+d)=dim_{ind}(C_i)$ where $c,d\in \mathbb{R}^{+}$ and $i\in I,$\\
(iv)\ $dim_{ind} (\cup_{i\in I} C_i)\leq k$  for some fixed $k\in\mathbb{N}_0$ whenever we have $dim_{ind} (C_i)\leq k (i\in I).$\\
(v)\ $dim_{ind} (\prod_{i\in I} C_i)\leq\sum_{i\in I} dim_{ind}(C_i)$ whenever $I$ is finite. 
\end{thm}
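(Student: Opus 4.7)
The plan is to dispose of all five parts by leveraging Definition~\ref{Definition2.2} and the inductive definition of $dim_{ind}$, with each step largely mirroring the standard development in \cite{R5}. The logical backbone is part (ii): given $C_i\subseteq C_j$, I would proceed by induction on $k=dim_{ind}(C_j)$. For the base case $k=-1$ (so $C_j=\emptyset$) the claim is trivial. For the inductive step, any base $(U_\alpha)_{\alpha}$ of open sets of $C_j$ with boundary dimensions at most $k-1$ restricts to a base $(U_\alpha\cap C_i)_{\alpha}$ of $C_i$, and one has $\partial_{C_i}(U_\alpha\cap C_i)\subseteq \partial_{C_j}(U_\alpha)\cap C_i$, whose dimension is at most $k-1$ by the induction hypothesis. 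Part (i) then follows immediately: since $C_i\subseteq \mathbb{R}^n$ and $dim_{ind}(\mathbb{R}^n)=n$ is standard, (ii) gives $dim_{ind}(C_i)\leq n$, while the lower bound $-1$ is built into the definition.

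For (iii), the similarity $x\mapsto cx+d$ is a homeomorphism of $\mathbb{R}^n$ onto itself, it carries open sets to open sets and boundaries to boundaries, and $dim_{ind}$ is a topological invariant, so the dimension is preserved. For (iv), I would invoke the classical countable sum theorem for small inductive dimension on separable metric spaces: by induction on $k$, each $C_i$ admits a base with boundaries of dimension $\leq k-1$; a Lindel\"of-style refinement produces a base of $\bigcup_i C_i$ whose boundaries embed into a countable union of sets of dimension $\leq k-1$, and the induction hypothesis applied to that union closes the step. For (v), induction on $|I|$ reduces the problem to the two-factor case, where product bases of $C_i$ and $C_j$ yield a base of $C_i\times C_j$ whose boundaries decompose as $(\partial U\times V)\cup(U\times\partial V)$; the induction hypothesis together with (iv) controls the dimension of each piece, and the inequality follows upon summing.

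The step I expect to be the main obstacle is (iv), the countable sum theorem: it is the one genuine piece of dimension theory in the list and relies essentially on separability rather than on purely combinatorial manipulation of the inductive definition. Parts (i), (ii), (iii) are either immediate consequences of the definition or routine homeomorphism-invariance arguments, and once (iv) is in place, (v) is a standard induction. In the write-up I would therefore give a self-contained proof of (ii) and (iii), derive (i) as a corollary, and for (iv) and (v) either cite \cite{R5} directly or sketch the inductive arguments above.
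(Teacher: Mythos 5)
The paper offers no proof of Theorem~\ref{Theorem2.3}: it is stated as a summary of known facts and attributed to \cite{R5}, so there is nothing in-paper to compare your argument against line by line. Your treatment of (i), (ii) and (iii) follows the standard development and is correct (note that the paper's inductive definition writes $dim_{ind}(U_i)\leq k-1$ where the boundaries $\partial U_i$ are meant; you rightly work with boundaries), and your reduction of (v) to the two-factor case via $\partial(U\times V)=(\partial U\times\overline{V})\cup(\overline{U}\times\partial V)$ is the usual route.

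The genuine gap is in (iv), exactly where you anticipated trouble, but the obstacle is not separability: it is closedness. The countable sum theorem for small inductive dimension in separable metric spaces requires the summands to be closed in the union (or to form an $F_\sigma$ decomposition); for arbitrary subspaces the conclusion is false, and hence so is part (iv) as literally stated. Concretely, $C_1=\mathbb{Q}$ and $C_2=\mathbb{R}\setminus\mathbb{Q}$ are both zero-dimensional (the first has a base of clopen intervals with irrational endpoints, the second likewise with rational endpoints), yet $dim_{ind}(C_1\cup C_2)=dim_{ind}(\mathbb{R})=1$; more generally the decomposition theorem exhibits $\mathbb{R}^n$ as a union of $n+1$ zero-dimensional subspaces. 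No ``Lindel\"of-style refinement'' can circumvent this, since closedness is precisely what lets one glue the local bases into a base of the union whose boundaries remain controlled by the inductive hypothesis. The repair is to prove (or cite from \cite{R5}) the sum theorem for countable unions of sets closed in their union, which suffices for every application in the paper --- there the $C_i$ are compact uniform Cantor sets, their linear images, finite Cartesian products, or countable unions of such, so the unions in Lemmas~\ref{Lemma3.1}--\ref{Lemma3.3} and Theorem~\ref{Theorem3.4} are all of $F_\sigma$ type. The same closed sum theorem is also what legitimizes the final step of your argument for (v), since both pieces of the boundary decomposition are closed.
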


As a corollary of this theorem, for the case of a finite family of uniform Cantor sets $\{ C_i\}_{i\in I},$ it follows that $dim_{ind} (\cup_{i\in I} C_i)= 0$ and $dim_{ind} (\prod_{i\in I} C_i)=0.$

\subsection{Hausdorff Dimension}  
The Hausdorff dimension, or Hausdorff-Besicowitch dimension, is considered as the one of the most prominent dimensions for fractals. It was first introduced by Felix Hausdorff in 1918 and was later  improved in terms of computational techniques by Abram S. Besicovitch. It is definable for any subset of the real line as follows\cite{R4}:  
\begin{defi}
\label{Definition2.4}
Let $C\subseteq \mathbb{R}$ and given $s\geq 0.$ Then, given s-dimensional Hausdorff measure of $C$ by 
\begin{equation}
\label{four}
H^{s}(C)=\lim_{\delta\rightarrow 0} \Big( \inf \{\sum_{i\in\mathbb{N}} |U_i|^{s} |\ \  C\subseteq \underset{i\in \mathbb{N}}{\cup} U_i:  0<|U_i|\leq \delta\} \Big),
\end{equation}
the Hausdorff dimension of $C$ is defined as:
\begin{equation}
\label{five}
dim_{H}(C)=\inf\{s\geq0|H^{s}(C)=0\}.
\end{equation}
\end{defi}

Using this definition, Hausdorff showed that the middle third Cantor set $C=C_{\{\frac{2^n}{2.3^n}\}_n}(1,0)$ has Hausdorff dimension equal to $\frac{log(2)}{log(3)}$. More generally\cite{R5}:
\begin{thm}
\label{Theorem2.5}
The Hausdorff dimension of the Uniform Cantor set  $C=C_{\{\beta_n \}_{n}}(s,l)$ is given by:
\begin{equation}
\label{six}
dim_{H}(C_{\{\beta_n \}_{n}}(s,l))=
\begin{cases}
\frac{log(s+1)}{log(s+1)+\underset{n\rightarrow\infty}{\limsup}(-\frac{log(\beta_n)}{n-1})} &
\textnormal{if $s\geq 1,\ \ l=0,\  \underset{n\in \mathbb{N}}{\inf}(\frac{\beta_n}{1-\sum_{k=1}^{n-1}\beta_k})>0,$}  \\
1
&
\textnormal{if $s\geq 1,\ \ 0<l< 1.$}
\end{cases}
\end{equation}
\end{thm}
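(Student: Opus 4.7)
The plan is to split into the two cases declared in the theorem and treat them separately, noting that the case $0<l<1$ is almost immediate while the case $l=0$ carries all the genuine content. For $0<l<1$, the Cantor set $C$ sits inside $\mathbb{R}$, so $dim_{H}(C)\leq 1$ trivially, while (\ref{two}) gives $\lambda_{1}(C)=l>0$; since on the real line the one-dimensional Hausdorff measure $H^{1}$ agrees with $\lambda_{1}$ up to a universal constant, this forces $H^{1}(C)>0$ and hence $dim_{H}(C)\geq 1$, yielding equality.

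For the case $l=0$ I would write $t^{*}=\log(s+1)/(\log(s+1)+L)$ with $L=\limsup_{n}(-\log\beta_{n}/(n-1))$ and set $a_{n}=-\log(1-\sum_{k=1}^{n}\beta_{k})$. A first preliminary step is a short comparison lemma: under the hypothesis $\inf_{n}\beta_{n}/(1-\sum_{k=1}^{n-1}\beta_{k})=\alpha>0$ one has $\beta_{n+1}\leq 1-\sum_{k=1}^{n}\beta_{k}\leq \beta_{n+1}/\alpha$, so $\limsup a_{n}/n=L$; this lets us pass freely between $\beta_{n}$ in the statement and the tails that actually appear in the length $\delta_{n}=(1-\sum_{k=1}^{n}\beta_{k})/(s+1)^{n}$. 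For the upper bound I would use the natural cover of $C$ by the $(s+1)^{n}$ closed intervals $I_{s,n}^{(j)}$, giving $H^{t}_{\delta_{n}}(C)\leq (s+1)^{n}\delta_{n}^{t}=\exp\!\bigl(-n[\,t\,a_{n}/n-(1-t)\log(s+1)\,]\bigr)$. For any $t>t^{*}$ the inequality $tL>(1-t)\log(s+1)$ holds, so along a subsequence $n_{k}$ on which $a_{n_{k}}/n_{k}\to L$ this expression tends to $0$; since $\delta_{n_{k}}\to 0$, we conclude $H^{t}(C)=0$ and hence $dim_{H}(C)\leq t^{*}$.

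For the lower bound I would apply the mass distribution principle to the uniform probability measure $\mu$ on $C$ that assigns mass $(s+1)^{-n}$ to each of the $(s+1)^{n}$ basic sets $I_{s,n}^{(j)}\cap C$. The main technical step, which I expect to be the real obstacle, is the uniform estimate $\mu(U)\leq K_{t}|U|^{t}$ for every $t<t^{*}$ and every interval $U\subseteq [0,1]$. Given $U$ one selects the unique $n$ with $\delta_{n}\leq|U|<\delta_{n-1}$; since consecutive level-$(n-1)$ intervals are separated by a gap of length at least $\beta_{n-1}/(s(s+1)^{n-2})$ and $|U|<\delta_{n-1}$ is too short to contain a whole level-$(n-1)$ interval, $U$ meets at most two level-$(n-1)$ intervals and hence at most $2(s+1)$ level-$n$ intervals, giving $\mu(U)\leq 2(s+1)^{1-n}$. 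Converting this to a $|U|^{t}$ bound reduces, after taking logarithms and using $|U|\geq\delta_{n}$, to $a_{n}/n\leq(1-t)\log(s+1)/t+O(1/n)$, which holds for all large $n$ precisely because $\limsup a_{n}/n=L<(1-t)\log(s+1)/t$ whenever $t<t^{*}$; the finitely many small $n$ are absorbed into $K_{t}$. The mass distribution principle then yields $dim_{H}(C)\geq t$, and letting $t\uparrow t^{*}$ concludes the proof.
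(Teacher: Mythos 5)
The paper does not prove Theorem \ref{Theorem2.5} at all --- it is quoted as a known background result with a citation to Edgar/Falconer --- so there is no in-paper argument to compare against. Your proof is essentially the standard one from those references (natural covers by the $(s+1)^n$ level-$n$ intervals for the upper bound, the mass distribution principle for the uniform measure for the lower bound, and $H^1=\lambda_1$ for the fat case), and it is correct; the only step worth making explicit is that passing from $H^{t}_{\delta_{n_k}}(C)\to 0$ along a subsequence to $H^{t}(C)=0$ uses the monotonicity of $\delta\mapsto H^{t}_{\delta}(C)$.
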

As a Corollary, the Hausdorff dimension of the above prominent family of Cantor sets in equation \eqref{three} is given by:
\begin{equation}
\label{seven}
dim_{H}(C_{\{\beta_n(s,\beta,l)\}_{n}}(s,l))=
\begin{cases}
\frac{log(s+1)}{-log(\beta)}
&
\textnormal{if\ \ $s\geq 1,\ 0<\beta<\frac{1}{s+1},\ l=0$}\\
1
&
\textnormal{if\ \ $s\geq 1,\ 0<\beta<\frac{1}{s+1},\ 0<l< 1.$}
\end{cases}
\end{equation}

Next, we consider useful notation for the Uniform Cantor sets with Hausdorff dimension $r>0$ as follows:
\begin{notat}
\label{Notation2.6}
The Linear Transform of the Uniform Cantor set of order $s\geq 1$, Hausdorff dimension $r>0$ and  Lebesgue measure $l\geq 0$   associated with the removing sequence $\{\beta_n \}_{n}$ is denoted as:
\begin{equation}
\label{eight}
F_{r,l,s}=c\times C_{\{\beta_n \}_{n}}(s,l)+d\  \text{for some}\ c>0,d\geq0.
\end{equation}
\end{notat}

The following theorem summarizes some key properties of the Hausdorff Dimension required in this paper\cite{R4}:

\begin{thm}
\label{Theorem2.7}
Let  $\{C_i\}_{i\in I}$ be a countable family of subsets of $\mathbb{R}^{n}\ (n \geq 1).$ Then:\newline\\
(i)\ $0\leq dim_{H}(C_i)\leq n$ where $i\in I,$\\
(ii)\ $dim_{H} (C_i)\leq dim_{H}(C_j)$ where $C_i\subseteq C_j (i\neq j),$\\
(iii)\ $dim_{H} (cC_i+d)=dim_{H}(C_i)$ where $c,d\in \mathbb{R}^{+}$ and $i\in I,$\\
(iv)\ $dim_{H} (\cup_{i\in I} C_i)=\sup_{i\in I}(dim_{H} C_i),$\\
(v)\ $dim_{H} (\prod_{i\in I} C_i)=\sum_{i\in I} dim_{H}(C_i)$ whenever $I$ is finite and one of $C_i$s is a uniform Cantor set. \\
(vi)\ $dim_{H}(C_i)=n (i\in I)$ whenever $\lambda_{n}(C_i)>0 (i\in I).$
\end{thm}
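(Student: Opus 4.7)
The plan is to treat parts (i)--(iv) as routine consequences of $\delta$-cover manipulations together with countable subadditivity of $H^{s}$, and to isolate (v) and (vi) as the two parts that need more than pure definition-chasing. I would proceed in the stated order since each part builds on the previous ones.

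For (i), covering $C_{i}\subseteq\mathbb{R}^{n}$ by a standard $\delta$-mesh of $\mathbb{R}^{n}$ gives at most $O(\delta^{-n})$ pieces of diameter $O(\delta)$, so $\sum|U_{k}|^{s}\to 0$ as $\delta\to 0$ whenever $s>n$, whence $dim_{H}(C_{i})\leq n$; the lower bound $dim_{H}(C_{i})\geq 0$ is built into Definition \ref{Definition2.4}. For (ii), any $\delta$-cover of $C_{j}$ automatically covers $C_{i}$, so $H^{s}(C_{i})\leq H^{s}(C_{j})$ for every $s$, yielding the claimed ordering of the infima. For (iii), the scaling identity $H^{s}(cC_{i}+d)=c^{s}H^{s}(C_{i})$ follows from $|cU+d|=c|U|$ applied to each cover, and hence the critical exponent is invariant. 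For (iv), countable subadditivity of $H^{s}$ gives $H^{s}(\cup_{i}C_{i})\leq\sum_{i}H^{s}(C_{i})$; picking $s>\sup_{i}dim_{H}(C_{i})$ makes each summand zero, so $dim_{H}(\cup_{i}C_{i})\leq\sup_{i}dim_{H}(C_{i})$, with the reverse inequality supplied by (ii).

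For (v), I would exploit the fact that the uniform-Cantor factor is a regular set. If $C=C_{\{\beta_{n}\}}(s,l)$ is one of the factors, the iterate $I_{s,n}$ consists of exactly $(s+1)^{n}$ intervals of identical length $\delta_{n}$, so $C$ is Ahlfors-regular and satisfies $dim_{H}(C)=\overline{dim}_{B}(C)$, matching the value computed in Theorem \ref{Theorem2.5}. Taking product covers of near-optimal $\delta$-covers of the remaining factors with the natural $(s+1)^{n}$-piece covers of $C$, and using the elementary diameter bound $|U_{1}\times\cdots\times U_{k}|\leq\sqrt{k}\max_{j}|U_{j}|$, converts this regularity into the desired inequality $dim_{H}(\prod_{i}C_{i})\leq\sum_{i}dim_{H}(C_{i})$. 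For (vi), I would use the fact that $n$-dimensional Hausdorff measure on $\mathbb{R}^{n}$ is a positive constant multiple of Lebesgue measure $\lambda_{n}$ on Borel sets; then $\lambda_{n}(C_{i})>0$ forces $H^{n}(C_{i})>0$, hence $dim_{H}(C_{i})\geq n$, which together with (i) forces equality.

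The main obstacle is (v): the product inequality for Hausdorff dimension fails in general without some regularity on at least one factor, so the heart of the argument is verifying the coincidence $dim_{H}=\overline{dim}_{B}$ for the uniform Cantor factor via its explicit stage structure and then iterating the product-cover estimate across a finite index set. Everything else in the theorem reduces to careful but essentially routine manipulation of $\delta$-covers.
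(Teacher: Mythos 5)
The paper does not prove Theorem \ref{Theorem2.7} at all; it is quoted from Falconer's book as a preliminary, so your proposal can only be judged on its own merits. Parts (i)--(iv) and (vi) are handled correctly and by the standard arguments: mesh-cube covers for the upper bound in (i), monotonicity of $H^{s}$ under inclusion of covers for (ii), the scaling identity $H^{s}(cC+d)=c^{s}H^{s}(C)$ for (iii), countable subadditivity plus (ii) for (iv), and the comparability of $H^{n}$ with $\lambda_{n}$ for (vi). These are fine.

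The gap is in (v): you only argue the upper inequality $dim_{H}(\prod_{i}C_{i})\leq\sum_{i}dim_{H}(C_{i})$, via the coincidence of Hausdorff and upper box dimension for the uniform Cantor factor and product covers. You never address the lower inequality $dim_{H}(\prod_{i}C_{i})\geq\sum_{i}dim_{H}(C_{i})$, and this direction is not supplied by monotonicity (a product is not a superset of its factors) nor by anything else in your sketch. It is in fact the harder half: the general statement $dim_{H}(A\times B)\geq dim_{H}(A)+dim_{H}(B)$ is a theorem of Marstrand--Howroyd type whose proof requires the mass distribution principle together with Frostman-type measures on each factor (or, for the specific uniform Cantor sets here, an explicit construction of a measure on the product satisfying a H\"older-type upper bound on balls). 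Without this half, equality in (v) is not established. A secondary, smaller issue: with three or more factors and only \emph{one} of them assumed to be a uniform Cantor set, iterating the estimate $dim_{H}(A\times B)\leq dim_{H}(A)+\overline{dim}_{B}(B)$ peels off only the regular factor, leaving a sub-product of irregular factors for which the additivity of $dim_{H}$ is not guaranteed; your iteration step needs either the stronger hypothesis that all but one factor is regular (which is what actually occurs in the paper's applications, where every factor is a countable union of scaled uniform Cantor sets) or an explicit reduction to that case.
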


Finally, throughout this paper we refer to the \emph{fractal} in terms of Mandelbrot's definition \cite{R1}:
\begin{defi}
\label{Definition2.8}
A subset $C\subseteq \mathbb{R}^{n}$ with Hausdorff dimension $dim_H (C)$ and the topological inductive dimension $dim_{ind}(C)$ is a fractal whenever:
\begin{equation}
\label{nine}
dim_H (C) > dim_{ind}(C).
\end{equation}
\end{defi}

\subsection{General Cartesian Product Distribution over Unions}
We finish this section with a review of the relationship between union and Cartesian products. As the Cartesian product is distributive over unions \cite{R6}, we have the following general result easily proved by induction on the dimension of the product $n$:

\begin{thm}
\label{Theorem2.9}
Let $I_{j}(1\leq j\leq n)$ be indexing sets and $\{C_{i_j} \}_{i_j \in I_j} (1\leq j\leq n)$ be families of subsets of $\mathbb{R}$ indexed by them, respectively. Then:
\begin{equation}
\label{ten}
\prod_{j=1}^{n} (\bigcup_{i_j\in I_j} C_{i_j})=\bigcup_{(i_1,\cdots,i_n)\in \prod_{j=1}^{n} I_j} (\prod_{j=1}^{n} C_{i_j}),
\end{equation}
where $\bigcup$ denotes the union and $\prod$ denotes the Cartesian product. 
\end{thm}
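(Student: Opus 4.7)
The plan is to proceed by induction on $n$, using the associativity of the Cartesian product and the standard two-factor distributive law as the base case. The result is essentially a bookkeeping statement: an ordered $n$-tuple lies in the left-hand product iff each coordinate is in some member of its respective union, and the choices of indices assemble into a single tuple indexing the right-hand union.

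For the base case $n=1$ both sides coincide with $\bigcup_{i_1\in I_1} C_{i_1}$, so there is nothing to prove. For $n=2$, I would carry out the key element-chasing argument: $(x_1,x_2)\in \bigl(\bigcup_{i_1\in I_1} C_{i_1}\bigr)\times \bigl(\bigcup_{i_2\in I_2} C_{i_2}\bigr)$ holds iff there exist $i_1\in I_1,\ i_2\in I_2$ with $x_1\in C_{i_1}$ and $x_2\in C_{i_2}$, iff $(x_1,x_2)\in C_{i_1}\times C_{i_2}$ for some $(i_1,i_2)\in I_1\times I_2$, iff $(x_1,x_2)\in \bigcup_{(i_1,i_2)\in I_1\times I_2}(C_{i_1}\times C_{i_2})$. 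This gives the $n=2$ identity with no measure-theoretic or topological content; it is purely definitional.

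For the inductive step, suppose the identity holds for $n$ and consider $n+1$ families. Using the canonical identification $\prod_{j=1}^{n+1}X_j\cong \bigl(\prod_{j=1}^{n}X_j\bigr)\times X_{n+1}$, I apply the $n=2$ case to the two factors $\prod_{j=1}^{n}\bigl(\bigcup_{i_j\in I_j} C_{i_j}\bigr)$ and $\bigcup_{i_{n+1}\in I_{n+1}} C_{i_{n+1}}$, then apply the inductive hypothesis to rewrite the first factor as $\bigcup_{(i_1,\dots,i_n)\in \prod_{j=1}^{n} I_j}\prod_{j=1}^{n}C_{i_j}$, and finally re-index the resulting double union over $\prod_{j=1}^{n}I_j \times I_{n+1}=\prod_{j=1}^{n+1}I_j$ to reassemble tuples $(i_1,\dots,i_{n+1})$.

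There is no genuine obstacle; the only subtlety worth flagging is the implicit canonical identification of nested Cartesian products with the ordered product $\prod_{j=1}^{n+1}X_j$, which is what licenses combining the pair $\bigl((i_1,\dots,i_n),i_{n+1}\bigr)$ into a single $(n+1)$-tuple. Since the families $\{C_{i_j}\}_{i_j\in I_j}$ are arbitrary subsets of $\mathbb{R}$, no hypothesis on countability, measurability, or dimension is needed — the identity is a purely set-theoretic consequence of the definitions.
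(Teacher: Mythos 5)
Your proposal is correct and matches the paper's approach exactly: the paper states that the identity is ``easily proved by induction on the dimension of the product $n$,'' using the distributivity of the Cartesian product over unions as the base case, which is precisely the element-chasing two-factor argument plus inductive step you describe. Your additional remark about the canonical identification $\prod_{j=1}^{n+1}X_j\cong\bigl(\prod_{j=1}^{n}X_j\bigr)\times X_{n+1}$ is a reasonable point of care that the paper leaves implicit.
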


%%%%%%%%%%%%%%%%%%%%%%%%%%%%%%%%%%%%%%%%%%%%%%%%%%%%%%%%%%%%%%%%%%%%%%%%%%%%%%%%%%%%%%%%%%%%%%%%%%%%%%%%%%%%%%
\section{Main Results}

We generalize the existence Hausdorff Dimension Theorem from fractals with a Lebesgue measure 0 (thin fractals) to those with a non-negative Lebesgue measure (fat fractals) with existence of the higher cardinal of beth- two. The construction process is accomplished in  four stages: (i) Showing the existence of fractals with a plausible Hausdorff dimension and a Lebesgue measure in $\mathbb{R}$; (ii) Expanding the cardinality of fractals in the first stage to the continuum;  (iii) Exending the result in the second stage to the higher dimensional Euclidean spaces $\mathbb{R}^{n} (n>1),$ and, (iv) Generalizing the result in the third stage to the cardinal of beth- two. We begin with the following existence result whose very special case has been stated in \cite{RR1}:
\begin{lem}
\label{Lemma3.1} 
For any real $0<r\leq 1$ and $l\geq 0$ there is a fractal  with the Hausdorff dimension $1_{[0]}(l).r+1_{(0,\infty)}(l)$ and the Lebesgue measure $l$ in $\mathbb{R}.$
\end{lem}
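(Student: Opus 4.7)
The plan is to construct the required fractal explicitly from the Uniform Cantor family $C_{\{\beta_n(s,\beta,l)\}_n}(s,l)$ of Section 2.1 and its affine images, splitting into three cases: $l>0$ (in which the target Hausdorff dimension is forced to be $1$); $l=0$ with $0<r<1$ (a single Cantor set suffices); and $l=0$ with $r=1$ (a countable union is needed). Throughout, the Lebesgue measure is read off from (2), the Hausdorff dimension from (7), and the small inductive dimension of each building block is $0$ by the remark following Definition 2.2.

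For $l>0$, I would fix any $l_0\in(0,1)$, any $\beta\in(0,1/2)$, and $s=1$, form $C=C_{\{\beta_n(1,\beta,l_0)\}_n}(1,l_0)$, and rescale to $F=(l/l_0)\cdot C$; by Theorems 2.3(iii) and 2.7(iii) both dimensions are preserved under this scaling, while $\lambda_1(F)=l$ and the second branch of (7) gives $dim_H(F)=1$. For $l=0$ and $r\in(0,1)$, I would take $s=1$ and $\beta=2^{-1/r}$, noting that $r<1$ yields $\beta\in(0,1/2)$, so that $F=C_{\{\beta_n(1,\beta,0)\}_n}(1,0)$ is well defined; the first branch of (7) then gives $dim_H(F)=\log 2/(-\log\beta)=r$, while $\lambda_1(F)=0$ is automatic. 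In both cases $0=dim_{ind}(F)<dim_H(F)$, so $F$ is a fractal with the required invariants.

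The case $l=0$, $r=1$ is the main obstacle: the infimum hypothesis in the first branch of (6) forces a positive lower bound on the per-stage removal ratio, which is incompatible with the sub-exponential decay $\limsup_n(-\log\beta_n/(n-1))=0$ that would be required to push the Hausdorff dimension up to $1$, so no single Uniform Cantor set with $l=0$ can reach dimension exactly $1$. The workaround is a countable disjoint union: applying the previous paragraph for each integer $k\geq 2$ yields a fractal $F_k$ with $dim_H(F_k)=1-1/k$ and $\lambda_1(F_k)=0$, and via Notation 2.6 each $F_k$ can be translated into the interval $[k,k+1]$ to ensure pairwise disjointness. Setting $F=\bigcup_{k\geq 2}F_k$, Theorem 2.7(iv) gives $dim_H(F)=\sup_k(1-1/k)=1$, countable subadditivity of $\lambda_1$ gives $\lambda_1(F)=0$, and Theorem 2.3(iv) applied with $k=0$ (together with $F\neq\emptyset$) gives $dim_{ind}(F)=0$. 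This delivers the fractal required in the final case and completes the case analysis.
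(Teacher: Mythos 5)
Your proposal is correct and follows essentially the same route as the paper: a single Uniform Cantor set $C_{\{\beta_n(s,\beta,0)\}}(s,0)$ for $l=0$, $0<r<1$; a countable union of such sets with dimensions $1-1/k$ for $l=0$, $r=1$; and a rescaled fat Cantor set for $l>0$. The only differences are cosmetic --- you pick $\beta=2^{-1/r}$ explicitly where the paper invokes the Intermediate Value Theorem, you scale by $l/l_0$ where the paper scales by $[l]+1$, and you add a (correct) justification of why a single Uniform Cantor set of measure zero cannot attain dimension $1$ under the hypothesis of Theorem \ref{Theorem2.5}.
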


\begin{proof}
We consider three scenarios:\newline\\
(i) $0<r<1, l=0:$\newline
Fix $s_0\in\mathbb{N},$ and consider the family of Uniform Cantor sets $C_{\{\beta_n(s_0,\beta, 0)\}}(s_0,0), $ $ (0<\beta<\frac{1}{s_0+1}).$ Then, their Hausdorff dimension are given by $f_{s_0}(\beta)=\frac{log(s_0+1)}{-log(\beta)},$ $ (0<\beta<\frac{1}{s_0+1}).$ Since $f_{s_0}$ is a continuous increasing function from $(0,\frac{1}{s_0+1})$ onto $(0,1),$ such that $f(0^{+})=0$ and $f(\frac{1}{s_0+1}^{-})=1,$ by an application of the Intermediate Value Theorem, there exists $\beta_0\in(0,\frac{1}{s_0+1})$ such that $f_{s_0}(\beta_0)=r.$ Now, it is enough to consider the following fractal in the interval $[0, 1]:$
\begin{equation}
\label{eleven}
F_{r,0,s_0}=C_{\{\beta_n(s_0,\beta_0, 0)\}}(s_0,0)
\end{equation}
 
\noindent(ii) $r=1, l=0:$\newline
Again, fix $s_0\in\mathbb{N}$ and consider the sequence $r_n=\frac{n}{n+1}\ (n\in\mathbb{N}).$ By part (i), there is a corresponding sequence of fractals $F_{r_n,0,s_0}\ (n\in\mathbb{N})$ such that $dim_{H}(F_{r_n,0,s_0})=r_n, dim_{ind}(F_{r_n,0,s_0})=0,$ and $\lambda_{1}(F_{r_n,0,s_0})=0,\  (n\in\mathbb{N}).$ Take:
\begin{equation}
\label{twelve}
F_{1,0,s_0}^{*}= \underset{n\in \mathbb{N}}{\cup} F_{r_n,0,s_0}  
\end{equation}
Accordingly, two consecutive applications of Theorem \ref{Theorem2.3}(iv) and Theorem\ref{Theorem2.7}(iv) proves the desired result. \newline\\
(iii) $r=1, l>0:$\newline
Finally, for fixed $s_0\in\mathbb{N}$  consider $F_{1,\frac{l}{[l]+1},s_0}=([l]+1)\times C_{\{\beta_n(s_0,\beta_0, \frac{l}{[l]+1})\}}(s_0,\frac{l}{[l]+1})$ with $\beta_n$ given by equation \ref{three}. Hence, two consecutive applications of Theorem \ref{Theorem2.3}(iii) and Theorem\ref{Theorem2.7}(iii) yields the desired result.
\end{proof}
A closer look at the proof of the Lemma \ref{Lemma3.1} and changing values $s_0\in\mathbb{N}$ shows that indeed for any real $0<r\leq 1$ and $l \geq 0$ there are countably infinite fractals  with the Hausdorff dimension $1_{[0]}(l).r+1_{(0,\infty)}(l)$  and the Lebesgue measure $l$ in $\mathbb{R}$. The following Lemma expands this result to the cardinality of the Continuum. 

\begin{lem}
\label{Lemma3.2} 
For any real $0<r\leq 1$ and $l\geq 0$ there is a continuum of  distinctive fractals with the Hausdorff dimension $1_{[0]}(l).r+1_{(0,\infty)}(l)$ and the Lebesgue measure $l$ in $\mathbb{R}.$
\end{lem}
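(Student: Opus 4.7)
The plan is to manufacture a continuum of distinct fractals by translating the single fractal produced in Lemma~\ref{Lemma3.1}. First I would apply Lemma~\ref{Lemma3.1} to obtain a bounded fractal $F_0 \subseteq \mathbb{R}$ (contained in $[0,1]$ when $l=0$ and in $[0,[l]+1]$ when $l>0$) with Hausdorff dimension $1_{[0]}(l)\cdot r + 1_{(0,\infty)}(l)$ and Lebesgue measure $l$. Then for each parameter $d \in (0,1]$ I would set $F_0^{(d)} := F_0 + d$ and argue that the map $d \mapsto F_0^{(d)}$ yields a continuum-indexed family of pairwise distinct fractals of the required type.

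Verifying that each $F_0^{(d)}$ has the correct invariants is immediate from what is already on the table. Theorem~\ref{Theorem2.7}(iii) gives $dim_{H}(F_0^{(d)}) = dim_{H}(F_0)$, translation invariance of the Lebesgue measure gives $\lambda_{1}(F_0^{(d)}) = l$, and Theorem~\ref{Theorem2.3}(iii) gives $dim_{ind}(F_0^{(d)}) = dim_{ind}(F_0) = 0$. Hence the Mandelbrot inequality in Definition~\ref{Definition2.8} continues to hold, so every $F_0^{(d)}$ is genuinely a fractal with the prescribed Hausdorff dimension and Lebesgue measure.

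The only step requiring a real argument is pairwise distinctness. If $F_0^{(d_1)} = F_0^{(d_2)}$ with $d_1 \neq d_2$, then $F_0 = F_0 + (d_2 - d_1)$, i.e., $F_0$ is invariant under a nonzero translation, which forces the unbounded orbit $\{x_0 + n(d_2 - d_1) : n \in \mathbb{Z}\}$ of any $x_0 \in F_0$ to lie inside $F_0$; this contradicts the boundedness of $F_0$. Thus $d \mapsto F_0^{(d)}$ is injective on $(0,1]$, and the resulting family has cardinality $2^{\aleph_0}$, the continuum. This distinctness check is the sole nontrivial point; every other ingredient is a direct consequence of the invariance properties assembled in the preliminaries, so I do not anticipate any substantial obstacle, and the same translation device should also serve in the higher-dimensional generalization that the paper announces next.
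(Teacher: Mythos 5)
Your proposal is correct, but it takes a genuinely different and more elementary route than the paper. The paper does not translate a single example: it partitions $[0,1]$ into the shrinking subintervals $[\tfrac{1}{2^s},\tfrac{1}{2^{s-1}}]$, places into the $s$-th one a rescaled copy $F_{r,0,s}$ of the order-$s$ Cantor fractal from Lemma~\ref{Lemma3.1}, and for each \emph{infinite} $I\subseteq\mathbb{N}$ forms the union $F^{I}_{r,0}=\cup_{s\in I}F_{r,0,s}$ (with analogous unions, plus a rescaling to restore measure $l$, in the $r=1$ and $l>0$ cases); distinct infinite $I$'s give distinct sets because the pieces live in disjoint intervals, and there are continuum many such $I$. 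Your construction replaces all of this with the family $F_0+d$, $d\in(0,1]$, and your boundedness/orbit argument for injectivity of $d\mapsto F_0+d$ is clean and complete; since the paper's own later constructions (e.g.\ the power-set family in Theorem~\ref{Theorem3.4}) treat ``distinctive'' as nothing more than set-theoretic distinctness, translates are a legitimate witness family. What your shortcut gives up is twofold: first, your continuum consists of mutually congruent sets, whereas the paper's $F^{I}$'s are not all isometric to one another, so the paper's family is richer in a sense the statement does not formally require; second, and more practically, the specific objects $F^{I}_{r,0}$ and ${F^*}^{I}_{1,l}$ of equations (\ref{thirteen})--(\ref{fiftheen}) are the building blocks reused verbatim in the Cartesian-product construction of Lemma~\ref{Lemma3.3}, so the paper's more elaborate proof is doing double duty. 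If you adopt your version, you would need to supply those higher-dimensional building blocks by some other means (your closing remark that translation also works in $\mathbb{R}^n$ covers the cardinality count there, but not the construction of a single fractal of non-integer dimension $r>1$, which genuinely needs the product of Cantor-type sets).
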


\begin{proof}
We consider three scenarios similar to the proof of Lemma \ref{Lemma3.1}. In each scenario, we partition a given interval (e.g. $[0,1]$) to an infinite union of its shrinking sub-intervals and, using linear transformations, put into them Cantor Fractals.  \newline\\
(i) $0<r<1, l=0:$\newline
By construction, in the part(i) of the proof of Lemma\ref{Lemma3.1}, there is sequence of fractals $\{F_{r,0,s} \}_{s\in\mathbb{N}}$ with $F_{r,0,s}=\frac{1}{2^s}C_{\beta_n(s,\beta_s,0)}+\frac{1}{2^s}$ in the interval $[\frac{1}{2^s}, \frac{1}{2^{s-1}}]$ such that: $dim_{H}(F_{r,0,s})=r, dim_{ind}(F_{r,0,s})=0,$ and $\lambda_{1}(F_{r,0,s})=0\ (s\in\mathbb{N}).$ Let $I\in\mathcal{P}(\mathbb{N})$ be infinite and define:
\begin{equation}
\label{thirteen}
F_{r,0}^{I}= \underset{s\in I}{\cup} F_{r ,0,s }  
\end{equation}
Then, again an application of Theorem \ref{Theorem2.3}(iii,iv) and Theorem \ref{Theorem2.7}(iii,iv) and uncountability of the set of such $I's$ proves the assertion. \newline\\
(ii) $r=1, l=0:$\newline
Again, consider the sequence $r_n=\frac{n}{n+1} (n\in\mathbb{N}).$ By part (i), there is a corresponding  sequence of fractals $F_{r_n,0}^{I}(n\in\mathbb{N})$ and an infinite set $I\in\mathcal{P}(\mathbb{N})$ such that $dim_{H}(F_{r_n,0}^{I})=r_n, dim_{ind}(F_{r_n,0}^{I})=0, $ and $\lambda_{1}(F_{r_n,0}^{I})=0(n\in\mathbb{N}).$ Put :
\begin{equation}
\label{fourteen}
{F^*}_{1,0}^{I}= \underset{n\in\mathbb{N}}{\cup} (\frac{1}{2^n} F_{r_n,0}^{I} +\frac{1}{2^n})  
\end{equation}
Now, by two applications of Theorem \ref{Theorem2.3}(iii,iv) and Theorem\ref{Theorem2.7}(iii,iv)  and uncountability of the set of such $I's$, the assertion follows.\newline\\
(iii) $r=1, l>0:$\newline
By construction, in the part(iii) of the proof of Lemma\ref{Lemma3.1}, there is sequence of fractals $\{F_{1,\frac{l}{[l]+1},s} \}_{s\in\mathbb{N}}$ in the interval $[0, [l]+1]$ such that: $dim_{H}(F_{1,\frac{l}{[l]+1},s})=1, dim_{ind}(F_{1,\frac{l}{[l]+1},s})=0,$ and $\lambda_{1}(F_{1,\frac{l}{[l]+1},s})=l\ (s\in\mathbb{N}).$ Let $F_{1 ,\frac{l}{[l]+1} ,s }^*=\frac{[l]+1}{2^s}F_{1 ,\frac{l}{[l]+1} ,s } +\frac{[l]+1}{2^s} (s\in\mathbb{N}),$ and  $I\in\mathcal{P}(\mathbb{N})$ be infinite and define:
\begin{equation}
\label{fiftheen}
{F^{*}}^{I}_{1,l}=\frac{l}{\lambda_{1}(\underset{s\in I}{\cup} F_{1 ,\frac{l}{[l]+1} ,s }^* )}\times \underset{s\in I}{\cup} F_{1 ,\frac{l}{[l]+1} ,s }^* 
\end{equation}

Finally, by two applications of Theorem \ref{Theorem2.3}(iii,iv) and Theorem\ref{Theorem2.7}(iii,iv)  and uncountability of the set of such $I's$ the assertion follows.
\end{proof}
An investigation into Lemma \ref{Lemma3.2} reveals its limitation to providing fractals with Hausdorff dimension on the range $0<r\leq 1.$ To obtain fractals with Hausdorff dimension $r>1$ we need to consider the higher dimensional Euclidean spaces. Using the idea of the countable unions of n-dimensional Cantor Fractal dusts (as the Cartesian product of the Uniform Cantor sets defined above), our  next result addresses this situation.

\begin{lem}
\label{Lemma3.3}
For any real $r > 0$ and $l\geq 0,$ there is a continuum  of distinctive fractals with the Hausdorff dimension $1_{[0]}(l).r+1_{(0,\infty)}(l).n$  and  Lebesgue measure $l$ in $\mathbb{R}^{n}$ where $(-[-r] \leq n)$.
\end{lem}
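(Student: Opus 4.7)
The plan is to mirror the case split of Lemmas \ref{Lemma3.1} and \ref{Lemma3.2} and promote the one-dimensional fractals already constructed by Lemma \ref{Lemma3.2} to $\mathbb{R}^{n}$ via finite Cartesian products, relying on the additivity of Hausdorff dimension (Theorem \ref{Theorem2.7}(v)) and the subadditivity of the small inductive dimension (Theorem \ref{Theorem2.3}(v)). The hypothesis $n\geq -[-r]=\lceil r\rceil$ is exactly what makes it possible to write $r=r_{1}+\cdots+r_{n}$ with every $r_{j}\in(0,1]$ (for instance $r_{j}=r/n$, valid because $n\geq r>0$), so that each factor of the intended product sits in the range already covered by the previous lemma.

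For the thin case $l=0$, I would set $r_{j}=r/n\in(0,1]$ and invoke Lemma \ref{Lemma3.2}(i,ii) to obtain, for each $j$, a continuum-parameterized family $\{F_{r/n,0}^{I_{j}}\}_{I_{j}}$ of one-dimensional fractals with Hausdorff dimension $r/n$, topological dimension $0$, and Lebesgue measure $0$, and then form
\[
C_{I_{1},\ldots,I_{n}}\;=\;\prod_{j=1}^{n} F_{r/n,0}^{I_{j}}.
\]
For the fat case $l>0$, where the target Hausdorff dimension is $n$, I would instead draw a continuum-parameterized family $\{F_{1,l}^{I}\}_{I}$ from Lemma \ref{Lemma3.2}(iii) and set
\[
C_{I}\;=\;F_{1,l}^{I}\times [0,1]^{n-1}.
\]
In both constructions the Lebesgue measure is immediate from Fubini ($0$ and $l$, respectively); the bound on $dim_{ind}$ comes from Theorem \ref{Theorem2.3}(v), giving $0$ in the thin case and at most $n-1$ in the fat case (since $dim_{ind}([0,1])=1$); and varying only a single index $I_{1}$ or $I$ already produces a continuum of pairwise distinct fractals, because the projection onto the first coordinate recovers the underlying infinite subset of $\mathbb{N}$.

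The main obstacle is that Theorem \ref{Theorem2.7}(v) demands that at least one factor of a Cartesian product be a uniform Cantor set, whereas the sets $F_{r/n,0}^{I_{j}}$ and $F_{1,l}^{I}$ produced by Lemma \ref{Lemma3.2} are only \emph{countable unions} of linearly transformed uniform Cantor sets. The remedy is a preliminary application of Theorem \ref{Theorem2.9} to distribute the outer Cartesian product over the inner unions, rewriting
\[
\prod_{j=1}^{n}\Bigl(\bigcup_{s\in I_{j}} F_{r/n,0,s}\Bigr)\;=\;\bigcup_{(s_{1},\ldots,s_{n})\in\prod_{j=1}^{n} I_{j}}\;\prod_{j=1}^{n} F_{r/n,0,s_{j}},
\]
so that every summand of the outer union is a genuine product in which every factor is a (linearly transformed) uniform Cantor set of dimension $r/n$. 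Theorem \ref{Theorem2.7}(v) then assigns each such summand Hausdorff dimension $n\cdot(r/n)=r$, and Theorem \ref{Theorem2.7}(iv) lifts this value to the countable union; the same maneuver, with the factors of $[0,1]^{n-1}$ absorbed as extra (full) factors, handles the fat case. With $dim_{H}$, $dim_{ind}$, and $\lambda_{n}$ now pinned down, Definition \ref{Definition2.8} certifies fractality and the proof is complete.
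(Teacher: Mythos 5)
Your proposal is correct and, for the thin case $l=0$, is essentially the paper's argument: both split $r$ as $n\cdot(r/n)$ with $r/n\in(0,1]$, take Cartesian products of $n$ members of the continuum-indexed families from Lemma \ref{Lemma3.2}, and resolve the same obstacle — that Theorem \ref{Theorem2.7}(v) wants a uniform Cantor set factor while the Lemma \ref{Lemma3.2} sets are countable unions of such — by first distributing the product over the unions via Theorem \ref{Theorem2.9} and then applying Theorem \ref{Theorem2.7}(iv),(v) summand by summand. Where you genuinely diverge is the fat case $l>0$: the paper forms the product $\prod_{j=1}^{n}{F^{*}}_{1,l}^{I_j}$ of $n$ fat one-dimensional sets (which has measure $l^{n}$) and then rescales to recover measure $l$, keeping $dim_{ind}=0$ throughout; you instead take $F_{1,l}^{I}\times[0,1]^{n-1}$, which hits measure $l$ on the nose by Fubini with no rescaling, at the cost of only bounding $dim_{ind}\leq n-1$ rather than pinning it at $0$. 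Your variant is arguably cleaner on the measure bookkeeping (the paper's scalar $(1/l)^{n-1}$ applied to a subset of $\mathbb{R}^{n}$ multiplies $\lambda_{n}$ by $(1/l)^{n(n-1)}$, not $(1/l)^{n(n-1)/n\cdot n}$ as needed, so its normalization as written is questionable), while the paper's variant yields the stronger conclusion that the constructed fat fractals are totally disconnected with zero topological dimension; since the Mandelbrot criterion only requires $dim_{H}(F)=n>dim_{ind}(F)$, either bound suffices for the lemma as stated.
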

 
\begin{proof}
Let $r>0, l\geq 0$ and for given $-[-r]\leq n $ define $r'=\frac{r}{n}\in(0,1].$Then, using equations (\ref{thirteen}-\ref{fiftheen}), we consider the following  scenarios with the Hausdorff dimension $r'$ and Lebesgue measure $l\geq 0$ given different values of $r',l:$

\begin{table}[H]
\caption{Construction of fractals with the given Hausdorff dimension $r'$ and Lebesgue measure $l$ in $\mathbb{R}$\label{Table1} }
\begin{center}
\begin{tabular}{ c|cc } 
\hline
 $(r',l)$                & $l=0$& $l>0$ \\
\hline 
$0<r'<1$ & $F_{r',l}^{I} (Eq.13)$ &
DNE\\ 
$r'=1$   & ${F^{*}}_{r',l}^{I}(Eq.14)$ &
${F^{*}}_{r',l}^{I}(Eq.15) $ \\ 
\hline
\end{tabular}
\end{center}
\end{table}
where $I\in\mathcal{P}(\mathbb{N}) $ is infinite.  Now, we consider n copies from each cell  in Table \ref{Table1} and consider the following Cartesian products:
\begin{table}[H]
\caption{Construction of fractals with given a Hausdorff dimension $r$ and Lebesgue measure $l$ in $\mathbb{R}^{n}$\label{Table2} }
\begin{center}
\begin{tabular}{ c|cc } 
\hline
 $(r,l)$  & $l=0$& $l>0$ \\
\hline 
$r\notin\mathbb{N}$ & $\prod_{j=1}^{n}F_{r',l}^{I_j} $ &  DNE\\ 
$r\in\mathbb{N}$   & $\prod_{j=1}^{n}{F^{*}}_{r',l}^{I_j} $ & $(\frac{1}{l})^{n-1}\times\prod_{j=1}^{n}{F^{*}}_{r',l}^{I_j} $ \\ 
\hline
\end{tabular}
\end{center}
\end{table}

where $I_j\in\mathcal{P}(\mathbb{N}) (1\leq j\leq n) $ are infinite. Finally, considering Table \ref{Table2}, the assertion follows by an application of Theorem \ref{Theorem2.3}(v), Theorem\ref{Theorem2.7}(iii),(v) and Theorem \ref{Theorem2.9};  and that there are  uncountably infinite sets of $I_j\in\mathcal{P}(\mathbb{N}) (1\leq j\leq n).$ 
\end{proof}

The constructed fractals in the Lemma \ref{Lemma3.3}  have two key features as follows:   First, they are centrally asymmetric with respect to the point symmetric of  $\overrightarrow{c}_n=(\frac{1}{2},\cdots,\frac{1}{2})$ where $-[-r]\leq n.$ This is a direct result of the fact that being centrally asymmetric is invariant under Cartesian products and unions; and, the building block of the fractals in the Lemma are centrally asymmetric fat Cantor sets with point symmetric of $\overrightarrow{c}_1= \frac{1}{2}.$ However, the Lemma can be expanded to the symmetric fractals with consideration of transformed symmetric fat Cantor sets. The details of the proof are minor modifications of our proof for the asymmetric case presented here with replacement of fractals $F$ in equations  (\ref{thirteen}-\ref{fiftheen})  with  $\frac{1}{2}((F \cup (-F))+1)$. Second, there is a continuum of them with the same Hausdorff dimension and the same Lebesgue measure in the Euclidean spaces  $\mathbb{R}^{n+1}-\mathbb{R}^{n}\ \ (-[-r]\leq n)$ as those in  $\mathbb{R}^{n}\ \ (-[-r]\leq n).$ Here, $\mathbb{R}^{n}$ is considered isomorphic to the subspace of $\mathbb{R}^{n}\times 0 \subset \mathbb{R}^{n+1}.$ This result is the direct consequence of the construction process in the proof of the Lemma.  Finally, while the Lemma \ref{Lemma3.3}  guarantees at least a continuum of fractals in n-dimensional Euclidean space with given properties, it does not guarantee the existence of the beth- two fractals given the generalized continuum hypothesis \cite{RRR16}.  This lack of precision of the cardinal number  (between either the continuum or beth- two) creates ground for further investigation. Our main result provides the answer.\par 

\begin{thm}
\label{Theorem3.4}
\textbf{(The General Hausdorff Dimension Theorem)} For any real $r > 0$ and $l\geq 0,$ there is a set of beth- two distinctive fractals with the Hausdorff dimension $1_{[0]}(l).r+1_{(0,\infty)}(l).n$  and  Lebesgue measure $l$ in $\mathbb{R}^{n}$ where $(-[-r] \leq n)$.
\end{thm}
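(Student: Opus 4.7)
The plan is to bootstrap Lemma~\ref{Lemma3.3} from continuum-many to $\beth_2$-many fractals via a disjoint null-set perturbation: take one fractal $F$ of the required type, attach every possible subset of a fixed continuum-sized null Cantor-type set $T \subset \mathbb{R}^n \setminus F$, and verify that each resulting set is a distinct fractal with unchanged Hausdorff dimension and Lebesgue measure.

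First, by Lemma~\ref{Lemma3.3}, fix any single $F \subset \mathbb{R}^n$ with $dim_H(F) = 1_{[0]}(l)\,r + 1_{(0,\infty)}(l)\,n$ and $\lambda_n(F) = l$. The construction displayed in Table~\ref{Table2} presents $F$ as a finite Cartesian product of countable unions of linearly transformed Uniform Cantor sets; since every Uniform Cantor set has vanishing inductive dimension, iterating Theorem~\ref{Theorem2.3}(iii)--(v) also yields $dim_{ind}(F) = 0$. Next, construct a compact set $T \subset \mathbb{R}^n$ disjoint from the bounded set $F$ with $|T| = \mathfrak{c}$, $\lambda_n(T) = 0$, $dim_H(T) = 0$, and $dim_{ind}(T) = 0$; any sufficiently thin Cantor set, placed via translation in a component of the complement of $F$, suffices -- choose its removing sequence $\{\beta_n\}$ so that $-\log\beta_n/(n-1) \to \infty$ and apply Theorem~\ref{Theorem2.5} to get $dim_H(T) = 0$.

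For each $N \in \mathcal{P}(T)$, set $F_N := F \cup N$. Then Theorem~\ref{Theorem2.7}(iv) gives $dim_H(F_N) = \max(dim_H F, dim_H N) = dim_H F$; Theorem~\ref{Theorem2.3}(iv), applied to the two-element family $\{F,N\}$, gives $dim_{ind}(F_N) \leq 0 < dim_H(F_N)$, so $F_N$ is a fractal; and $\lambda_n(F_N) = \lambda_n(F) + \lambda_n(N) = l$ by disjointness and $\lambda_n(T) = 0$. Since $T \cap F = \emptyset$ and each $N \subseteq T$, the identity $F_N \cap T = N$ shows that $N \mapsto F_N$ is injective, so the family $\{F_N : N \in \mathcal{P}(T)\}$ has cardinality $|\mathcal{P}(T)| = 2^{\mathfrak{c}} = \beth_2$, matching the trivial upper bound $|\mathcal{P}(\mathbb{R}^n)| = \beth_2$.

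The main obstacle is the auxiliary assertion $dim_{ind}(F) = 0$ for the Lemma~\ref{Lemma3.3} fractal: this is not recorded explicitly and requires unpacking Table~\ref{Table2} and chaining Theorem~\ref{Theorem2.3}(iii)--(v). A minor subsidiary point is confirming that a sufficiently thin Cantor set in a single coordinate really has vanishing Hausdorff dimension, which is immediate from Theorem~\ref{Theorem2.5} under the explicit removing-sequence choice above.
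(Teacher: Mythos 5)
Your proposal is correct and takes essentially the same route as the paper's own proof: both reach cardinality beth-two by adjoining arbitrary subsets of a fixed null, low-Hausdorff-dimension, zero-inductive-dimension Cantor-type set to a fractal supplied by Lemma~\ref{Lemma3.3}, then invoking Theorem~\ref{Theorem2.3}(iv) and Theorem~\ref{Theorem2.7}(iv) to check that the union keeps the required Hausdorff dimension, Lebesgue measure, and fractal property. Your version is slightly cleaner on one point the paper leaves implicit, namely distinctness: by placing $T$ disjoint from $F$ you get injectivity of $N \mapsto F \cup N$ directly from $F_N \cap T = N$, whereas the paper relies without comment on the translation by $[l]+1$ in its perturbing set $F_{00}$ to separate it from the Lemma~\ref{Lemma3.3} fractals.
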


\begin{proof}
We accomplish the proof in four steps as follows:\newline\\
\indent First, let $r>0, l\geq 0,$ and fix $n\geq -[-r].$ Then, for the function $h_n(\beta)=\frac{n.log(2)}{-log(\beta)},$ $\ (0<\beta<\frac{1}{2}),$ given $h_n(0^{+})=0$ there is $\beta_{00}\in (0,\frac{1}{2})$ such that $h_n(\beta_{00})<\frac{r}{2}.$ \\ 
\indent Second, take a fractal $F_{00}=(C+[l]+1)^n$ in $\mathbb{R}^n$ where $C=C_{\beta_n}$ with $\beta_n=\beta_n(1,\beta_{00},0)$ $\ (n\geq 1)$ as in equation (\ref{three}).  Then, $\lambda(F_{00})=0,\ \ \dim_{ind}(F_{00})=0,$ and $dim_{H}(F_{00})<\frac{r}{2}.$ Define the power set of $F_{00}$ with cardinal of beth- two as follows: 
\begin{equation}
\label{sixtheen}
\mathcal{F}_1(r,l)= \{ F\subseteq \mathbb{R}^n | F\subseteq F_{00}, dim_{H}(F)<\frac{r}{2}, \lambda(F)=0, dim_{ind}(F)=0  \}. 
\end{equation}
 \indent Third, consider the family of fractals given by Lemma \ref{Lemma3.3}:
\begin{equation}
\label{seventheen}
 \mathcal{F}_2(r,l)=\{ F\subseteq \mathbb{R}^n| dim_{H}(F)=1_{[0]}(l).r+1_{(0,\infty)}(l).n,\ \lambda(F)=l, dim_{ind}(F)=0 \}. 
\end{equation}
 \indent Fourth, given two families in equations (\ref{sixtheen}),(\ref{seventheen})  define the following family of fractals with cardinal of beth- two: 
\begin{equation}
\mathcal{F}(r,l)=\{F_1\cup F_2| F_1\in \mathcal{F}_1(r,l), F_2\in \mathcal{F}_2(r,l)\}. 
\end{equation}
 \indent Finally, let $F\in\mathcal{F}(r,l).$ Then, by construction, there are $F_i\in\mathcal{F}_i(i=1,2)$ such that: $F=F_1\cup F_2.$ Accordingly, by another application of Theorem \ref{Theorem2.3}(iv), and Theorem \ref{Theorem2.7}(iv):
\begin{eqnarray}
dim_{H}(F)&=&\max(dim_{H}(F_1),dim_{H}(F_2))\nonumber\\
&=&dim_{H}(F_2)=1_{[0]}(l).r+1_{(0,\infty)}(l).n,
\\
\lambda(F)&=&\lambda(F_2)=l,\\
dim_{ind}(F)&=&0.
\end{eqnarray}
This completes the proof.
\end{proof}

The assertion of Theorem \ref{Theorem3.4} and its proof methodology have two immediate consequences: First, in terms of having the cardinality of beth-two, the theorem's assertion and an application of the Cantor-Schroder-Bernstein theorem \cite{RRRR17} put the set of virtual fractals of $\mathbb{R}^n$  the set of virtual fractals of $\mathbb{R}^n$ in the same category of mathematical sets such as the sigma-algebra of Lebesgue measurable sets in $\mathbb{R}^n,$  the power set of $\mathbb{R}^n,$  the set of functions from $\mathbb{R}^n$  to $\mathbb{R}^n,$ and, the Stone-Cech compactification of $\mathbb{R}^n.$  Second, the theorem's proof can be modified to show that the cardinality of the set of non-fractals in $\mathbb{R}^n$  is beth-two as well. To see this, let $n \geq 1, C$ be the standard middle third Cantor set, and $I^n$ be the unit cube in $\mathbb{R}^n$. Define the family $\mathcal{F}(n)=\{I^n \cup P_{C,n}| P_{C,n}\subseteq (C+1)^n \}$. Then, for any  $F\in\mathcal{F}(n)$ we have $dim_{ind}(F)=n=dim_{H}(F).$ Hence, by Mandelbrot's definition \ref{Definition2.8},$F$ is a non-fractal. Consequently, $\mathcal{F}(n)$ is a subset of the set of non-fractals in $\mathbb{R}^n$. Accordingly, the assertion follows from $\mathcal{F}(n)$ having the cardinality of beth-two and applying the Cantor-Schroder-Bernstein theorem \cite{RRRR17}.

\section{Discussion}
  
This work presented an existence theorem for fractals of a given Hausdorff dimension and a Lebesgue measure by considering finite Cartesian products of the countable unions of Uniform Cantor sets with plausible Lebesgue measure. In addition, it generalized the former existence theorem in terms of the Lebesgue measure  and cardinal number. \par 
This work's contributions to the fractal geometry literature covers two perspectives: First, it highlights the advantage of the Cantor sets to other well-known classical fractals in showing existence of fractals with any Hausdorff dimension and Lebesgue measure. Other prominent fractals lack this feature given not being defined in the one dimensional Euclidean space $\mathbb{R}$ and having Hausdorff dimension larger than 1. Examples include the Sierpinski triangle, Takagi curve, Julia set, Triflake, Koch curve, and Apollonian gasket. Second, the existence theorem is equipped with constructive proof (versus pure existence proof) presenting real fractals for given Hausdorff dimension and the Lebesgue measure \cite{R7}. This key feature helps us to explore other properties of the constructed fractals yielding to more comprehensive information on them. \par 
There are some limitations in this work that create four new lines of research for the interested reader. First, we considered only Mandelbrot's strict mathematical definition to present the existence result. However, there are other agreed-upon key descriptive features in the definition of fractal that need to be considered. Examples of these characteristics are self-similarity type(exact, quasi, statistical), fine structure, irregularity(local, global), and the recursive definition\cite{R4}. Second, while we considered only Hausdorff dimension as the index of fractal dimension in the Mandelbrot's definition, the existence case for the other indices of fractal dimension remains to be investigated. For instance, we can consider the Renyi dimension (with special cases, such as the Minkowski dimension \cite{R4}, Information dimension \cite{R8}, Correlation dimension \cite{R9}), the Higuchi dimension \cite{R10}, the Lyapunov dimension \cite{R11}, Packing dimension\cite{R12}, the Assouad dimension\cite{R13} and the generalization of the Hausdorff dimension\cite{R14}. Third, a more rigorous and comprehensive method is to investigate the existence problem of fractals for the generalized fractal space equipped with a fractal structure and the generalized fractal dimension \cite{R15}. Finally, the existence result in this work is limited to deterministic fractals constructed by their associated deterministic recursive processes. However, its validity for the more general random fractals remains an open question. We summarize the above points as the following set of open problems: 

\subsection*{ Open Problems}
 Given the n-dimensional Euclidean space $\mathbb{R}^n (n\geq 1).$ \newline\\
 (1) Does the precise cardinality of the set of all distinctive fractals vary by the applied fractal dimension ? \newline\\
 (2) Does the precise cardinality of the set of all distinctive fractals depend on the fractal structure ? \newline\\
 (3) Does the precise cardinality of the set of all distinctive fractals depend on the deterministic or random nature of the fractal? 

\section{Conclusion} There has been a growing interest in fractal dimension and its calculations from different methods since the early 1900s without retrospective exploration of the existence of fractals for a given dimension. This work addressed the problem in part by presenting a constructive existence result from deterministic viewpoint, paving the way for exploring a more generalized random perspective.

%%%%%%%%%%%%%%%%%%%%%%%%%%%%%%%%%%%%%%%%%%%%%%%%%%%%%%%%%%%%%%%%%%%%%%%%%%%%%%%%%%%%%%%%%%%

\end{document}